\newtheorem{theorem}{Theorem}
\newtheorem{lemma}{Lemma}
\newtheorem{corollary}{Corollary}
\theoremstyle{remark}
\numberwithin{equation}{section}
\begin{document}

\title{An Asymptotic Formula for the Chebyshev Theta Function}

%    Information for first author
\author{Aditya Ghosh}
\address{Indian Statistical Institute, Kolkata, India}
\email{ghoshadi26@gmail.com}

%    General info
\date{}
\keywords{chebyshev function,  geometric mean of first $n$ primes, product of prime numbers.}

\begin{abstract}
 Let $\{p_n\}_{n\ge 1}$ be the sequence of primes and $\vartheta(x) = \sum_{p \leq x} \log p$, where $p$ runs over the primes not exceeding $x$, be the Chebyshev $\vartheta$-function. In this note we derive lower and upper bounds for $\vartheta(p_n)/n$ by comparing it with $\log p_{n+1}$ and deduce that $\vartheta(p_n)/n=\log p_{n+1}\left(1-\frac{1}{\log n}+\frac{\log\log n}{\log^2 n}\left(1+o(1)\right)\right).$ 
\end{abstract}

\maketitle

%\onehalfspacing
\section{Introduction}
\linespread{1.4}

Let $\{p_n\}_{n\ge 1}$ be the sequence of the prime numbers and $\vartheta(x) = \sum_{p \leq x} \log p$, where $p$ runs over the primes not exceeding $x$, be the Chebyshev $\vartheta$-function. The type of bounds that we shall discuss here was introduced by Bonse \cite{bonse}, who showed that $\vartheta(p_n)>2\log p_{n+1}$ holds for every $n\ge 4$ and $\vartheta(p_n)>3\log p_{n+1}$ holds for every $n\ge 5$. Thereafter, P\'osa \cite{posa} showed that, given any $k>1,$ there exists $n_k$ such that $\vartheta(p_n)>k\log p_{n+1}$ holds for all $n\ge n_k.$ Panaitopol \cite{panaitopol} showed that in P\'osa's result we can have $n_k=2k$ and also gave the bound \begin{equation*}\frac{\vartheta(p_n)}{\log p_{n+1}}>n-\pi(n) \quad (n\ge 2),\end{equation*} 
where $\pi(n)$ is equal to the number of primes less or equal to $n$. Hassani \cite{hassani} refined Panaitopol's inequality to the following \begin{equation}\label{hassani}\frac{\vartheta(p_n)}{\log p_{n+1}}>n-\pi(n)\Big(1-\frac{1}{\log n} \Big)\quad (n\geq 101).\end{equation}
Recently, Axler [1, Propositions 4.1 and 4.5] showed that \begin{equation*}\label{axler}1 + \frac{1}{\log p_n} + \frac{2.7}{\log^2p_n} < \log p_n - \frac{\vartheta(p_n)}{n} < 1 + \frac{1}{\log p_n} + \frac{3.84}{\log^2p_n},\end{equation*}
where the left-hand side inequality is valid for every integer $n \geq 218$ and the right-hand side inequality holds for every $n \geq 74004585$. This provides the following asymptotic formula $$\displaystyle\frac{\vartheta(p_n)}{n} = \log p_n - 1 - \frac{1}{\log p_n} + \Theta \Big(\frac{1}{\log^2p_n}\Big).$$ For further terms, see Axler [1, Proposition 2.1].

In the present note, we show the following result, which is a refinement of \eqref{hassani}. 
\begin{theorem} For all $n\ge 6,$ we have
\begin{equation}\label{th1}
    n\Big(1-\frac{1}{\log n}+\frac{\log\log n}{4 \log^2 n}\Big)\le\frac{\vartheta(p_n)}{\log p_{n+1}}\le n\Big(1-\frac{1}{\log n}+\frac{\log\log n}{\log^2 n}\Big).
\end{equation}The left-hand side inequality also holds for $2\le n\le 6.$
\end{theorem}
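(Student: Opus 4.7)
The plan is to derive \eqref{th1} from the effective double inequality of Axler (quoted in the introduction) combined with sharp explicit estimates for $p_n$. Dividing Axler's bound on $\vartheta(p_n)/n$ through by $\log p_{n+1}$ yields
\begin{equation*}
\frac{\vartheta(p_n)}{n\log p_{n+1}} = \frac{\log p_n}{\log p_{n+1}} - \frac{1}{\log p_{n+1}} - \frac{1}{\log p_n\,\log p_{n+1}} + O\!\left(\frac{1}{\log^2 p_n\,\log p_{n+1}}\right),
\end{equation*}
so the task reduces to estimating each term on the right in terms of $\log n$ and $\log\log n$.

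The key asymptotic input would be a Cipolla-type expansion, known in sharp effective form after Rosser, Schoenfeld, and Dusart:
\begin{equation*}
\log p_{n+1} = \log n + \log\log n + \frac{\log\log n - 1}{\log n} + O\!\left(\frac{(\log\log n)^2}{\log^2 n}\right).
\end{equation*}
From this I would extract $1/\log p_{n+1} = 1/\log n - \log\log n/\log^2 n + o(\log\log n/\log^2 n)$, and note that the ratio $\log p_n/\log p_{n+1}$ differs from $1$ by $O(1/n)$ since the prime gap $p_{n+1}-p_n$ is negligible compared to $p_n$. Substituting into the identity above produces
\begin{equation*}
\frac{\vartheta(p_n)}{n\log p_{n+1}} = 1 - \frac{1}{\log n} + \frac{\log\log n - 1}{\log^2 n} + o\!\left(\frac{\log\log n}{\log^2 n}\right),
\end{equation*}
which lies between the two expressions in \eqref{th1}: the upper side (coefficient $1$) is immediate from $\log\log n - 1 \le \log\log n$, and the lower side (coefficient $1/4$) follows as soon as $3\log\log n \ge 4$, i.e. for all but the smallest values of $n$.

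The main obstacle will be making the argument fully effective for every $n \ge 6$. Axler's upper bound on $\vartheta(p_n)/n$ is valid for $n \ge 218$, whereas the matching lower bound kicks in only at $n \ge 74{,}004{,}585$. For the uncovered ranges the inequality \eqref{th1} would need to be checked by substituting looser but wider-range explicit forms of the prime estimates, together with a direct computer-assisted verification on the finite initial segment; the asymmetric coefficients $1$ and $1/4$ are chosen precisely so that the slack absorbs the explicit constants $2.7$ and $3.84$ coming from Axler as well as the lower-order terms in the Cipolla expansion. The boundary cases $2 \le n \le 6$ for the left-hand inequality then reduce to a finite numerical check.
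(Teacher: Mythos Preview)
Your route---start from Axler's bounds on $\log p_n-\vartheta(p_n)/n$ and convert via a Cipolla expansion---is genuinely different from the paper's. The paper never uses Axler; it invokes instead the Robin and Massias--Robin inequalities
\[
G(n,2.1454)\ \le\ \frac{\vartheta(p_n)}{n}\ \le\ G(n,2),\qquad G(n,a)=\log n+\log\log n-1+\frac{\log\log n-a}{\log n},
\]
valid from $n\ge 3$ and $n\ge 198$ respectively, which are already written in $\log n$ and $\log\log n$. These are paired with explicit two-sided bounds $V(n)\le\log p_{n+1}\le U(n)$ derived from Dusart's and Massias--Robin's estimates for $p_n$, and the whole proof reduces to two elementary one-variable inequalities $G(n,2.1454)\ge\mathcal F(n,0.25)\,U(n)$ and $G(n,2)\le\mathcal F(n,1)\,V(n)$, plus a computer check only for $n\le 395$.

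Your plan has two gaps. The minor one: the assertion $\log p_n/\log p_{n+1}=1+O(1/n)$ ``since the prime gap is negligible'' is false as stated---prime gaps are not $O(\log p_n)$, so $(p_{n+1}-p_n)/p_n$ is not $O(1/n)$. What you actually need, and what the effective $p_n$-bounds do give, is only $\log p_{n+1}-\log p_n=O(1/\log n)$, hence $\log p_n/\log p_{n+1}=1+O(1/\log^2 n)=1+o(\log\log n/\log^2 n)$; that still suffices for the asymptotic, but the justification must be rewritten. The serious gap is the threshold $n\ge 74{,}004{,}585$ for Axler's lower bound on $\vartheta(p_n)/n$, which drives the \emph{left} inequality in \eqref{th1}. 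You acknowledge this and appeal to ``looser but wider-range explicit forms of the prime estimates'' for the uncovered range; but the natural such form is precisely Robin's $\vartheta(p_n)/n\ge G(n,2.1454)$ for $n\ge 3$---the paper's starting point---and once that is invoked Axler becomes superfluous. So the proposal either leaves an unspecified $7\times 10^{7}$-term direct verification or, for everything below $74$ million, collapses onto the paper's argument.
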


We also generalise the left-hand side of \eqref{th1} to have the following result.
\begin{theorem}
For every $0<\varepsilon<1,$ there exists $n_\varepsilon\in \mathbb{N}$ such that for every $n\geq n_{\varepsilon}$ it holds that \begin{equation}\label{th1.2}
    n\Big(1-\frac{1}{\log n}+(1-\varepsilon)\frac{\log\log n}{ \log^2 n}\Big)\le\frac{\vartheta(p_n)}{\log p_{n+1}}\le n\Big(1-\frac{1}{\log n}+\frac{\log\log n}{\log^2 n}\Big).
    \end{equation}
\end{theorem}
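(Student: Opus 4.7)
Since the upper bound already holds for all $n\ge 6$ by Theorem 1, only the lower bound requires new work, and we have the freedom to take $n$ as large as needed in terms of $\varepsilon$. The strategy is to replace each quantity by an asymptotic expansion in the two variables $M:=\log n$ and $N:=\log\log n$, and to compare both sides of the desired inequality to the appropriate order.

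First, I combine Axler's refined formula $\vartheta(p_n)/n=\log p_n-1-1/\log p_n+O(1/\log^2 p_n)$ (cited in the introduction) with the classical expansion of the $n$th prime $p_n=n\bigl(M+N-1+(N-2)/M+O(N^2/M^2)\bigr)$, which upon taking logarithms yields
\[
\log p_n = M+N+\frac{N-1}{M}+O\!\left(\frac{N^2}{M^2}\right).
\]
Substituting back gives
\[
\frac{\vartheta(p_n)}{n}=M+N-1+\frac{N-2}{M}+O\!\left(\frac{N^2}{M^2}\right).
\]
Since $p_{n+1}/p_n\to 1$, $\log p_{n+1}$ obeys the same expansion as $\log p_n$ to this order.

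Next, I expand the proposed lower bound $\log p_{n+1}\bigl(1-1/M+(1-\varepsilon)N/M^2\bigr)$ by inserting the asymptotic for $\log p_{n+1}$ and multiplying out. Collecting terms of orders $1$, $1/M$, and $1/M^2$ produces
\[
M+N-1+\frac{(1-\varepsilon)N-1}{M}+O\!\left(\frac{N^2}{M^2}\right).
\]
Subtracting this from the expansion of $\vartheta(p_n)/n$ leaves
\[
\frac{\varepsilon N-1}{M}+O\!\left(\frac{N^2}{M^2}\right),
\]
which is positive once $M$ is sufficiently large compared to $N/\varepsilon$, i.e.\ once $\log n\gg\varepsilon^{-1}\log\log n$. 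This furnishes an explicit $n_\varepsilon$ and, upon multiplying through by $n$, closes the argument.

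The main obstacle is book-keeping rather than any deep new input: the expansion of $\vartheta(p_n)/n$ must be tracked to absolute accuracy better than $\varepsilon N/M$, which forces us to carry $\log p_n$ and $p_n$ one term further than what appears in the theorem statement. One should also verify that replacing $\log p_{n+1}$ by $\log p_n$ costs only $o(1/M)$; this follows from any crude prime-gap estimate such as $p_{n+1}-p_n=o(p_n)$, so the proof ultimately rests only on Axler's formula and the standard asymptotic expansion of $p_n$.
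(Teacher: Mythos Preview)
Your argument is correct in outline and yields the theorem, but it takes a different route from the paper. The paper never invokes Axler's formula; instead it reuses the explicit effective inequalities already assembled for Theorem~1: Robin's lower bound $\vartheta(p_n)/n\ge G(n,2.1454)$ and the explicit upper bound $\log p_{n+1}<U(n)$ from Lemma~3. The only new ingredient is Lemma~5, a purely algebraic verification that $G(n,2.1454)\ge \mathcal{F}(n,1-\varepsilon)\,U(n)$ for all large $n$, obtained by substituting $x=\log n$ and observing that the resulting expression is eventually increasing to $+\infty$. Chaining the three inequalities gives the lower bound directly. Your approach, by contrast, imports Axler's $\Theta$-formula and the standard expansion of $p_n$, then compares asymptotic expansions; this is more conceptual and shows transparently \emph{why} the coefficient $1-\varepsilon$ works, but it leans on an external result rather than the paper's own toolkit, and it does not give any handle on $n_\varepsilon$.

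One small repair: your claim that $\log p_{n+1}-\log p_n=o(1/M)$ does \emph{not} follow from $p_{n+1}-p_n=o(p_n)$ alone, which only yields $o(1)$. You need a slightly stronger input, for instance the expansion $p_k=k(\log k+\log\log k+O(1))$ applied to $k=n$ and $k=n+1$, which gives $\log p_{n+1}-\log p_n=O(1/n)$, far more than sufficient.
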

\begin{corollary}
We have $\displaystyle\frac{\vartheta(p_n)}{n}=\log p_{n+1}\left(1-\frac{1}{\log n}+\frac{\log\log n}{\log^2 n}\left(1+o(1)\right)\right).$
\end{corollary}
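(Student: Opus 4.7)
My plan is to derive the corollary as an essentially algebraic consequence of Theorem 2 (the double inequality \eqref{th1.2}); no further prime-counting input should be needed.

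First I would divide both sides of \eqref{th1.2} by $n$ and subtract $\log p_{n+1}(1-\tfrac{1}{\log n})$, so that the inequality reads
\begin{equation*}
(1-\varepsilon)\,\frac{\log\log n}{\log^2 n}\,\log p_{n+1}\;\le\;\frac{\vartheta(p_n)}{n}-\log p_{n+1}\Big(1-\frac{1}{\log n}\Big)\;\le\;\frac{\log\log n}{\log^2 n}\,\log p_{n+1},
\end{equation*}
valid for $n\ge n_\varepsilon$. Setting
\begin{equation*}
c(n)\;:=\;\Big(\frac{\vartheta(p_n)}{n\log p_{n+1}}-1+\frac{1}{\log n}\Big)\cdot\frac{\log^2 n}{\log\log n},
\end{equation*}
the above becomes $1-\varepsilon\le c(n)\le 1$ for all $n\ge n_\varepsilon$. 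Hence $\limsup_{n\to\infty}c(n)\le 1$ and, since $\varepsilon>0$ was arbitrary, $\liminf_{n\to\infty}c(n)\ge 1$; so $c(n)\to 1$, i.e.\ $c(n)=1+o(1)$. Rearranging the definition of $c(n)$ and multiplying by $\log p_{n+1}$ gives the claimed formula.

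There is no genuine obstacle here beyond bookkeeping: the entire content of the corollary is packed into Theorem 2, and what is at stake is only the observation that the arbitrariness of $\varepsilon$ upgrades the two-sided sandwich into the symmetric Landau statement $1+o(1)$. The one place to be careful is the quantifier order, namely that the sandwich on $c(n)$ forces both $\liminf$ and $\limsup$ to equal $1$ \emph{before} one can legitimately write $c(n)=1+o(1)$; this is the only step requiring any thought.
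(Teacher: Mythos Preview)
Your proposal is correct and matches the paper's intended derivation: the paper states the corollary immediately after Theorem~2 without a separate proof, treating it as the obvious consequence of the sandwich \eqref{th1.2}, which is exactly the $\liminf/\limsup$ argument you spell out. The only cosmetic slip is that ``divide by $n$'' should really read ``multiply by $\log p_{n+1}/n$'' to get the displayed inequality for $\vartheta(p_n)/n$, but your displayed formula and the definition of $c(n)$ are correct as written.
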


\section{Preliminaries}

Define $ G(n,a)=\log n+\log\log n-1+\frac{\log\log n-a}{\log n}$. We shall use the following bounds for $\vartheta(p_n)/n$. 

 \begin{lemma}
For every $n \geq 3$, we have
\begin{equation}
\frac{\vartheta(p_n)}{n} \geq G(n,2.1454), \label{2.2}
\end{equation}
and for every $n \geq 198$, we have
\begin{equation}
\frac{\vartheta(p_n)}{n} \leq G(n,2). \label{2.3}
\end{equation}
\end{lemma}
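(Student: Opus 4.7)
The plan is to deduce both bounds from Axler's explicit estimate recalled in the introduction together with a matching Cipolla-type expansion for the $n$-th prime $p_n$. Writing Axler's inequality as the identity
$$\frac{\vartheta(p_n)}{n} = \log p_n - 1 - \frac{1}{\log p_n} + R(n),$$
with $R(n) \in \bigl[-3.84/\log^2 p_n,\, -2.7/\log^2 p_n\bigr]$ for sufficiently large $n$, it suffices to convert $\log p_n$ and $1/\log p_n$ into functions of $\log n$ and $\log\log n$ and then collect terms.

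First I would take the logarithm of an explicit Cipolla-type bound of the form
$$p_n = n\Bigl(\log n + \log\log n - 1 + \frac{\log\log n - 2}{\log n} + \rho(n)\Bigr),$$
available in the work of Axler and Dusart with sharp two-sided bounds on $\rho(n)$, and use $\log(1+x) = x - x^2/2 + O(x^3)$ to obtain
$$\log p_n = \log n + \log\log n + \frac{\log\log n - 1}{\log n} + O\Bigl(\frac{(\log\log n)^2}{\log^2 n}\Bigr).$$
A geometric-series expansion then gives
$$\frac{1}{\log p_n} = \frac{1}{\log n} - \frac{\log\log n}{\log^2 n} + O\Bigl(\frac{(\log\log n)^2}{\log^3 n}\Bigr).$$
Substituting both into Axler's identity and collecting the $1/\log n$ contributions yields the coefficient $\log\log n - 2$, matching $G(n,2)$ at leading order.

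To pin down the explicit constants $a = 2$ (upper, $n \geq 198$) and $a = 2.1454$ (lower, $n \geq 3$), I would replace each $O(\cdot)$ above by its sharpest available explicit version and propagate the constants step by step. The asymmetry between $2$ and $2.1454$ provides precisely the slack needed to absorb all remainder terms of order $(\log\log n)^2/\log^2 n$ into the main coefficient on the lower side, at the cost of a larger admissible starting index for the upper bound. For the range of $n$ below the threshold where the asymptotic analysis becomes effective, a finite direct computation of $\vartheta(p_n)$ for the first few primes finishes the verification of the lower bound down to $n=3$.

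The main obstacle is bookkeeping rather than ideas: four error sources must be tracked simultaneously, namely Axler's constants $2.7$ and $3.84$ in the $\vartheta$-expansion, the explicit remainder $\rho(n)$ in the Cipolla expansion of $p_n$, the quadratic term in $\log(1+x)$, and the quadratic term in the geometric expansion of $1/\log p_n$. One has to check that in the combined expression the worst-case coefficient of $1/\log n$ does not exceed $\log\log n - 2$ on the upper side for $n \geq 198$, nor fall below $\log\log n - 2.1454$ on the lower side for $n$ beyond the finite check range.
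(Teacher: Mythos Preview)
The paper does not prove this lemma at all: it simply records that \eqref{2.2} is a theorem of Robin (1983) and \eqref{2.3} a theorem of Massias and Robin (1996), and cites those papers. These are classical explicit estimates, quoted as black boxes for later use.

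Your plan instead tries to \emph{derive} these 1983/1996 inequalities from Axler's 2018 estimate. Beyond being chronologically backwards and likely circular (Axler's proof rests on the same circle of explicit prime-counting estimates), there is a concrete gap in the range of validity. As stated in the introduction, Axler's inequality giving a \emph{lower} bound for $\vartheta(p_n)/n$ --- the one you need for \eqref{2.2} --- holds only for $n\ge 74\,004\,585$. So your ``finite direct computation of $\vartheta(p_n)$ for the first few primes'' would in fact have to cover roughly $7.4\times 10^{7}$ values of $n$, not a handful. You also never carry out the promised explicit propagation of constants; the slack $2.1454-2=0.1454$ in the $1/\log n$ coefficient must absorb terms of order $(\log\log n)^2/\log^2 n$ coming from the Taylor expansion of $\log p_n$, and you have not verified that this actually succeeds uniformly down to the threshold where Axler's bound begins. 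In short, the lemma is a citation, and your alternative route is both unnecessary and, as written, incomplete.
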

\begin{proof}
The inequality \eqref{2.2} is due to Robin \cite{robin}, and the inequality \eqref{2.3} was given by Massias and Robin \cite{massias-robin}.
\end{proof}

\begin{lemma} For every $n\ge 227$, we have
\begin{equation}\label{2.4}
     p_n\le n(\log n+\log\log n-0.8),
 \end{equation}and for every $n\ge 2$,
\begin{equation}\label{Dusart}
p_n\ge n(\log n+\log\log n-1).
\end{equation}\end{lemma}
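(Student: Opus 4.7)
The plan is to recognize both inequalities as explicit versions of the classical refinement of the prime number theorem $p_n = n\bigl(\log n+\log\log n-1+o(1)\bigr)$ due to Cipolla, and to derive them by inverting explicit bounds on $\pi(x)$. Since $\pi(p_n)=n$, the assertion $p_n\le n(\log n+\log\log n-0.8)$ is equivalent to $\pi\bigl(n(\log n+\log\log n-0.8)\bigr)\ge n$, and $p_n\ge n(\log n+\log\log n-1)$ is equivalent to $\pi\bigl(n(\log n+\log\log n-1)\bigr)\le n$ (taking $n\ge 2$ so that $n(\log n+\log\log n-1)>0$).

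The first main step would be to invoke explicit Rosser--Schoenfeld / Dusart estimates of the shape
\[
\frac{x}{\log x}\Bigl(1+\frac{1}{\log x}+\frac{c_1}{\log^2 x}\Bigr)\le \pi(x)\le \frac{x}{\log x}\Bigl(1+\frac{1}{\log x}+\frac{c_2}{\log^2 x}\Bigr),
\]
valid for $x$ beyond some explicit threshold. The second step is an asymptotic substitution: set $x=n(\log n+\log\log n+c)$ with $c\in\{-0.8,-1\}$, expand
\[
\log x=\log n+\log\!\Bigl(\log n+\log\log n+c\Bigr)=\log n+\log\log n+\frac{\log\log n+c}{\log n}+O\!\left(\frac{(\log\log n)^2}{\log^2 n}\right),
\]
and plug this into the bounds on $\pi(x)$. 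Collecting terms, the leading contribution $x/\log x$ already produces $n$ plus correction terms of order $n\log\log n/\log^2 n$; the constant $c$ is chosen precisely so that the correction has the right sign. This yields $\pi(x)\ge n$ for $c=-0.8$ and $\pi(x)\le n$ for $c=-1$ once $n$ is sufficiently large.

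The main obstacle is controlling the explicit range of validity, particularly the threshold $n\ge 227$ in \eqref{2.4}: the asymptotic argument only gives the bound for $n$ larger than some ineffective $n_0$, and one must either use sharp enough explicit $\pi(x)$ bounds to push $n_0$ down to $227$, or else fill the intermediate range by direct numerical verification, tabulating $p_n$ against $n(\log n+\log\log n-0.8)$. The lower bound \eqref{Dusart} is easier in that regard, since once the asymptotic argument is set up for large $n$ the remaining finite range $2\le n\le n_0$ can be checked by a short computation. In practice I would simply cite Dusart's papers for both inequalities, as the author does.
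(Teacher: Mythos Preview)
Your plan is correct in spirit but takes a much longer route than the paper. The paper does not re-derive either inequality from $\pi(x)$ bounds; it simply quotes known explicit estimates for $p_n$. For the lower bound \eqref{Dusart} the paper cites Dusart directly, as you surmise. For the upper bound \eqref{2.4}, however, the paper does \emph{not} cite Dusart: it invokes the Massias--Robin inequality $p_n\le n(\log n+\log\log n-0.9385)$, valid for $n\ge 8602$, which is strictly stronger than \eqref{2.4} in that range, and then covers $227\le n\le 8601$ by direct computation. So your closing remark misidentifies the source.

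Your inversion argument would work, but note two small slips. First, the equivalence for the lower bound should read $\pi\bigl(n(\log n+\log\log n-1)\bigr)<n$ (i.e.\ $\le n-1$), not $\le n$; the relation $p_n\le x\iff\pi(x)\ge n$ gives $p_n>x\iff\pi(x)<n$, and a non-strict $\le n$ would not exclude $p_n=x$. Second, as you yourself note, the asymptotic expansion only yields the inequalities for $n$ beyond some ineffective $n_0$, and pushing $n_0$ down to $227$ would require either very sharp explicit $\pi(x)$ constants or a substantial numerical check---which is exactly what the paper's citation-plus-computation shortcut avoids. In short, your approach reconstructs the content of the cited papers, whereas the paper treats Lemma~2 as a black box imported from the literature.
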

 \begin{proof}
 For $n\ge 8602$, we have the following stronger bound
 \begin{equation}\label{massias-robin2}
     p_n\le n(\log n+\log\log n-0.9385)
 \end{equation}
 given by Massias and Robin \cite{massias-robin}. For $227\le n\le 8601$ we verify the inequality \eqref{2.4} by direct computation. The inequality (\ref{Dusart}) is due to Dusart \cite{dusart}.
 \end{proof}

For the sake of brevity, we shall define $\displaystyle \mathcal{F}(n,\lambda)=1-\frac{1}{\log n}+\lambda\frac{\log\log n}{\log^2 n}$ and rewrite (\ref{th1}) as \begin{equation}\label{th2}
\mathcal{F}(n,0.25)\log p_{n+1}\le\vartheta(p_n)/n\le\mathcal{F}(n,1)\log p_{n+1}\end{equation}
and rewrite \eqref{th1.2} as \begin{equation}\label{new}
\mathcal{F}\left(n,{1-\varepsilon}\right)\log p_{n+1}\le\vartheta(p_n)/n\le\mathcal{F}(n,1)\log p_{n+1}.\end{equation}

\section{Proof of Theorem 1}

The proof of Theorem 1 is split into two lemmas. In the first lemma, we give lower and upper bounds for $\log p_{n+1}.$ 
\begin{lemma}\label{mybd} For every $n\ge 140$, we have
    \begin{equation}\label{lemma3.1}
        \log p_{n+1}<\log n+\log\log n+\frac{\log\log n-0.8+0.018}{\log n}=U(n),
    \end{equation}and for every $n \geq 2$, we have
    \begin{equation}\label{lemma3.2}
\log p_{n+1}>\log n+\log\log n+\frac{\log\log n-1}{\log n+0.5(\log\log n-1)}=V(n).
    \end{equation}
\end{lemma}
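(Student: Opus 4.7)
The plan is to bound $p_{n+1}$ using Lemma 2 and take logarithms, applying $\log(1+x)\le x$ for the upper estimate and the tangent-line inequality $\log(1+x)\ge 2x/(2+x)$ (valid for $x\ge 0$) for the lower estimate. The only subtleties are the shift $n\mapsto n+1$ on the right-hand side of \eqref{2.4} and ensuring the tangent-line inequality is applied in its correct regime.

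For the lower bound \eqref{lemma3.2}, since $p_{n+1}>p_n$, the Dusart bound \eqref{Dusart} gives $p_{n+1}>n(\log n+\log\log n-1)$ for $n\ge 2$. Once the right-hand side is positive (which happens for $n\ge 4$), taking logarithms yields
\begin{equation*}
\log p_{n+1}>\log n+\log\log n+\log\!\left(1+\frac{\log\log n-1}{\log n}\right).
\end{equation*}
For $n\ge 16$ we have $\log\log n\ge 1$, so $x:=(\log\log n-1)/\log n\ge 0$, and the elementary inequality $\log(1+x)\ge 2x/(2+x)$ converts the right-hand side exactly into $V(n)$. The finitely many remaining cases $2\le n\le 15$ are settled by inspecting the primes directly.

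For the upper bound \eqref{lemma3.1} the main step is to apply \eqref{2.4} at index $n+1$ (requiring $n+1\ge 227$) and use $\log(1+x)\le x$ to obtain
\begin{equation*}
\log p_{n+1}\le\log(n+1)+\log\log(n+1)+\frac{\log\log(n+1)-0.8}{\log(n+1)}.
\end{equation*}
I would then translate $n+1$ to $n$ via $\log(n+1)\le\log n+1/n$ and $\log\log(n+1)\le\log\log n+1/(n\log n)$, noting that $\log\log n>0.8$ for $n\ge 140$ so that the third term is handled by monotonicity. This collects an additive error of order $(\log n+1+1/\log n)/(n\log n)$ on top of $\log n+\log\log n+(\log\log n-0.8)/\log n$, and matching with $U(n)$ reduces to the inequality $\log n+1+1/\log n\le 0.018\, n$. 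To make the threshold manageable I would invoke the stronger Massias--Robin bound \eqref{massias-robin2} for $n+1\ge 8602$: the slack in the numerator widens from $0.018$ to $0.1565$, and the corresponding reduced inequality holds with enormous room. The remaining interval $140\le n\le 8600$ is then cleared by direct computation.

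The principal obstacle is the upper bound: the constant $0.018=0.8-0.782$ appears to be chosen delicately so that the asymptotic argument succeeds at a threshold low enough to leave a computationally feasible residual range. Bookkeeping of the error terms when passing from $n+1$ to $n$ is the place where one must be most careful, and attention is needed so that each application of $\log(1+x)\le x$ or $\log(1+x)\ge 2x/(2+x)$ occurs where it is valid. By contrast, the lower bound is essentially a one-line argument once the range restriction $n\ge 16$ is isolated.
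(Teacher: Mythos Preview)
Your approach coincides with the paper's: both apply Lemma~2 at index $n+1$, bound $\log(1+t)$ above by $t$ for \eqref{lemma3.1} and below by $t/(1+t/2)$ (exactly your tangent-line inequality, written in the paper as $\log(1+1/x)>1/(x+0.5)$) for \eqref{lemma3.2}, and perform the same $n+1\to n$ bookkeeping to reduce the upper bound to an inequality equivalent to your $\log n+1+1/\log n\le 0.018\,n$. Two minor remarks: the paper simply observes that this reduced inequality already holds for $n\ge 400$, so your detour through the sharper Massias--Robin bound is unnecessary and only $140\le n\le 399$ need direct checking; and your explicit handling of $2\le n\le 15$ for the lower bound (where $\log\log n<1$ puts the tangent-line inequality outside its valid range) is genuinely needed---the paper glosses over this.
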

\begin{proof} First, we show that for every $x\geq 1$
\begin{equation}\label{logineq}
    \frac{1}{x+0.4}> \log\left(1+\frac{1}{x}\right)>\frac{1}{x+0.5}.
\end{equation}
In order to prove this, we set $ f_a(x)=\log(1+x)-\frac{x}{1+ax}$ and note that, $ f_a'(x)=\frac{x(a^2x+2a-1)}{(1+x)(1+ax)^2}.$ Hence, $f'_{0.4}(x)<0$ for every $x\in(0,1.25)$ which yields $f_{0.4}(1/x)<f_{0.4}(0)=0$ for every $x\ge 1.$
On the other hand, $f'_{0.5}(x)>0$ for all positive $x$, which gives $f_{0.5}(1/x)>f_{0.5}(0)=0$ for every $x\ge 1.$ This completes the proof of (\ref{logineq}).
 
 Next, we give a proof of \eqref{lemma3.1}.  By \eqref{2.4}, we have for $n \geq 227$, \begin{equation}\label{3.4}   \log p_{n+1} \leq \log n+1 + \log ( \log n+1 + \log \log n+1 - 0.8).\end{equation}
 The left-hand side inequality of (\ref{logineq}) implies $\displaystyle\log(n+1)<\log n+\frac{1}{n+0.4}.$ Using (\ref{logineq}) once again, we get\begin{equation*}
    \log\log (n+1)<\log\log n+\log\Big(1+\frac{1}{(n+0.4)\log n}\Big)<\log\log n+\frac{1}{(n+0.4)\log n}.
\end{equation*} 
 Applying this to (\ref{3.4}), we obtain for $n \geq 227$,
 \begin{equation}\label{3.5}
     \log p_{n+1}<\log n+\log\log n+\frac{\log\log n-0.8}{\log n}+\frac{1}{\log n}\cdot\frac{\log n+1+1/\log n}{n+0.4}.
 \end{equation}
 Now, $g(x)=\displaystyle\frac{\log x+1+1/\log x}{x+0.4}$ is a decreasing function for $x\ge2$ with $g(e^{5.99})\le 0.018$. Hence $g(x) \leq 0.018$ for every $x \geq 400 > e^{5.99}$. Combined with (\ref{3.5}), it shows out that $\log p_{n+1} < U(n)$ for every $n \geq 400$. For every $140\leq n\leq 399$ we check the inequality \eqref{lemma3.1} with a computer. This completes the proof of (\ref{lemma3.1}).
 
 To prove the inequality \eqref{lemma3.2}, first note that \eqref{Dusart} gives for every $n\ge 1$,\begin{equation}\label{3.6}
 \log p_{n+1}\ge\log(n+1)+\log(\log(n+1)+\log\log(n+1)-1).
 \end{equation}The right-side inequality of (\ref{logineq}) gives $\displaystyle\log(n+1)>\log n+\frac{1}{n+0.5}.$ Using (\ref{logineq}) once again, we get, for $n\ge2,$
\begin{align*}
    \log\log(n+1)-\log\log n>\log\Big(1+\frac{1}{(n+0.5)\log n}\Big)>\frac{1}{(n+0.5)\log n+0.5}.
\end{align*}Applying this to \eqref{3.6}, we arrive at
\begin{align*}
 \log p_{n+1}&>\log n+\log\Big(\log n+\frac{1}{n+0.5}+\log\log n+\frac{1}{(n+0.5)\log n+0.5}-1\Big)\\
  &>\log n+\log\log n+\log\Big(1+\frac{\log\log n-1}{\log n}\Big).\end{align*}Applying (\ref{logineq}) one more time, we get $\log p_{n+1}>V(n)$ for every $n\ge 2.$\end{proof}

\vspace{-3mm}\begin{lemma}\label{lm4} For every $n\ge 396$, we have
\begin{equation}\label{lemma4.1}
     G(n,2.1454)\ge \mathcal{F}(n,0.25)\cdot U(n),\end{equation}
 and for every $n\ge 2$, we have    \begin{equation}\label{lemma4.2}
  G(n,2)\le \mathcal{F}(n,1)\cdot V(n).
 \end{equation} Here $U(n)$ and $V(n)$ are defined as in Lemma \ref{mybd}. \end{lemma}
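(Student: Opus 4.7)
The plan is to reduce both inequalities to algebraic comparisons in $L := \log n$ and $M := \log\log n$ (with $M = \log L$), via direct expansion of the products $\mathcal{F}(n,\lambda)\cdot U(n)$ and $\mathcal{F}(n,\lambda)\cdot V(n)$ in powers of $1/L$, and then comparing with the corresponding expansion of $G(n,a) = L + M - 1 + (M-a)/L$.

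For \eqref{lemma4.1}, I would multiply out
\[
\mathcal{F}(n,0.25) \cdot U(n) = \Bigl(1 - \tfrac{1}{L} + \tfrac{0.25 M}{L^2}\Bigr)\Bigl(L + M + \tfrac{M - 0.782}{L}\Bigr)
\]
and collect powers of $1/L$ to obtain an expression of the form $L + M - 1 + A_1/L + A_2/L^2 + A_3/L^3$ with $A_1, A_2, A_3$ explicit polynomials in $M$. Subtracting $G(n,2.1454)$ and multiplying through by $L^3$ reduces \eqref{lemma4.1} to a single polynomial inequality in $L$ and $M$ whose leading term is $0.75\,M L^2$ and whose negative corrections are $O(LM^2)$. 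The asymptotic sign is clearly correct; however, a quick numerical check suggests the inequality is essentially tight at $n = 396$, so a clean monotonicity argument is needed to close the gap. My proposed route is to substitute $M = \log L$, treat the difference as a single-variable function $h(L)$, verify $h(\log 396) \ge 0$ by direct evaluation, and then show $h' \ge 0$ on $[\log 396, \infty)$ by straightforward calculus.

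For \eqref{lemma4.2}, the key observation is the algebraic identity
\[
\mathcal{F}(n,1) \cdot V(n) = L + M - 1 + \frac{M^2}{L^2} + \frac{(M-1)\,\mathcal{F}(n,1)}{L + 0.5(M-1)},
\]
which follows from the trivial expansion $(1 - 1/L + M/L^2)(L+M) = L + M - 1 + M^2/L^2$. Subtracting $G(n,2) = L + M - 1 + (M-2)/L$ and expanding $(M-1)/(L + 0.5(M-1))$ as a geometric series in the small parameter $0.5(M-1)/L$, the difference $\mathcal{F}(n,1)V(n) - G(n,2)$ works out to $1/L + O(M^2/L^2)$, which is strictly positive with substantial margin once $n$ is bounded away from $1$. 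I would handle $n \ge 16$ (where $M \ge 1$) by elementary bounds on the $O(M^2/L^2)$ correction, and then verify the finitely many cases $2 \le n \le 15$ by direct computation (noting that although $\mathcal{F}(n,1)$ and $V(n)$ may individually be negative for very small $n$, their product still dwarfs $G(n,2)$).

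The main obstacle is \eqref{lemma4.1}: the inequality is essentially sharp at $n = 396$, so every constant in the expansion (the $0.25$ in $\mathcal{F}$, the $2.1454$ in $G$, and the $0.782$ in $U$) must be tracked exactly and no slack can be afforded. The monotonicity step, together with a tight numerical verification at the boundary, is the only route I see to avoid an uncomfortable computer-assisted range check over all of $140 \le n \le 395$.
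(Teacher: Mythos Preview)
Your overall strategy matches the paper's: substitute $x=\log n$, expand, and reduce to a one-variable inequality. For \eqref{lemma4.1} the paper does essentially what you propose, but organizes the difference as a sum of three explicit functions, each increasing on $[5.7,\infty)$, evaluates at $x=5.99$ to cover $n\ge 400$, and checks $396\le n\le 399$ by hand; this is just a tidier packaging of your ``verify $h(\log 396)\ge 0$ and $h'\ge 0$'' plan.

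For \eqref{lemma4.2} the paper takes a shorter route. Your identity $\mathcal{F}(n,1)\,V(n)=L+M-1+M^2/L^2+(M-1)\,\mathcal{F}(n,1)/\bigl(L+0.5(M-1)\bigr)$ already reduces the claim to
\[
\frac{M^2}{L^2}+\mathcal{F}(n,1)\cdot\frac{M-1}{L+0.5(M-1)}\ \ge\ \frac{M-2}{L},
\]
which is exactly the paper's inequality (3.9). Rather than expand the middle term as a geometric series and split cases at $n=16$, the paper drops the nonnegative $M^2/L^2$ term and observes that, after clearing denominators, the remaining inequality is \emph{algebraically equivalent} (for $x\ge 1$) to the elementary statement
\[
x^2+\log x\,(\log x-1)\ >\ \tfrac{x}{2}\,\log x\,(\log x-1),
\]
which is immediate for all $x>0$. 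This handles every $n\ge 3$ in one stroke, with only $n=2$ checked separately. Your expand-and-bound route would also succeed, but the exact equivalence is both shorter and avoids the case analysis.
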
 
\begin{proof} We start with the proof of (\ref{lemma4.1}). Setting $x = \log n$, the inequality (\ref{lemma4.1}) can be rewritten as
\begin{equation*}
    x+\log x-1+\frac{\log x-2.1454}{x}\ge \Big(1-\frac{1}{x}+\frac{\log x}{4x^2}\Big)\Big(x+\log x+\frac{\log x-0.8+0.018}{x}\Big),
\end{equation*}which is equivalent to
\begin{equation*}
    \Big(\frac{3}{4}\log x+\frac{\log x}{x}\Big)+\Big(-2.1454-\frac{\log^2 x}{4x}-\frac{\log^2 x}{4x^2}\Big)+(0.8-0.018)\Big(1-\frac{1}{x}+\frac{\log x}{4x^2}\Big)\geq 0.
\end{equation*}
The left-hand side is a sum of three increasing functions on the 
interval $[5.7, \infty)$ and at $x = 5.99$ the left-hand side is positive. 
So the last inequality holds for every $x \geq 5.99$; i.e., for every $n 
\geq 400$. A direct computation shows that the inequality (\ref{lemma4.1}) also 
holds for every $n$ satisfying $396 \leq n \leq 399$.

Next, we give a proof of (\ref{lemma4.2}). It is easy to see that
  $$x^2 + \log x(\log x - 1) > \frac{x}{2}\log x(\log x - 1)$$
for every $x> 0$. Now, for $x\ge 1$, the last inequality is seen to be equivalent to
 $$ \left( 1 - \frac{1}{x} + \frac{\log x}{x^2} \right) \frac{\log x - 1}{x + 0.5(\log x-1)} \geq \frac{\log x-2}{x}.$$
Since $\frac{\log^2 x}{x^2} \geq 0$ for every $x > 0$, we get
\begin{equation}\label{3.9}  \frac{\log^2 x}{x^2} + \left( 1 - \frac{1}{x} + \frac{\log x}{x^2} \right) \frac{\log x - 1}{x + 0.5(\log x-1)} \geq \frac{\log x-2}{x}\end{equation}
for every $x \ge 1$.  Substituting $x = \log n$ in (\ref{3.9}), we obtain the inequality (\ref{lemma4.2}) for every integer $n \geq 3$. We can directly check that (\ref{lemma4.2}) holds for $n=2$ as well.\end{proof}

Finally, we give a proof of Theorem 1.
\begin{proof}[Proof of Theorem 1.] We use (\ref{2.2}), (\ref{lemma4.1}) and (\ref{lemma3.1}) respectively to see that for every $n\ge 396,$
 \begin{equation*}
    \vartheta(p_n)/n\ge G(n,2.1454)\ge \mathcal{F}(n,0.25)U(n)>\mathcal{F}(n,0.25)\log p_{n+1}. \end{equation*}
A direct computation shows that the left-hand side inequality of (\ref{th2}) also holds for every integer $n$ with $2 \leq n \leq 395$.
 
 In order to prove the right-hand side inequality of (\ref{th2}), we combine (\ref{2.3}), (\ref{lemma4.2}) and (\ref{lemma3.2}) respectively to get  \begin{equation*}\vartheta(p_n)/n\le G(n,2)\le \mathcal{F}(n,1)V(n)\le\mathcal{F}(n,1)\log p_{n+1}
 \end{equation*}for every $n\ge 198$. For smaller values of $n$, we use a computer. 
 \end{proof}

\section{Proof of Theorem 2}

The right-hand side of \eqref{new} has been established already. To show the left-hand side, we start with the following lemma.
\begin{lemma}\label{lm5} For any $0<\varepsilon<1$, there exists $m_\varepsilon\in\mathbb{N}$ such that
\begin{equation}\label{lemma last}
     G(n,2.1454)\ge \mathcal{F}(n,1-\varepsilon)\cdot U(n)\end{equation}holds for every $n\geq m_\varepsilon.$ Here $U(n)$ is defined as in Lemma \ref{mybd}. \end{lemma}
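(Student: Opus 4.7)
My plan is to mimic the proof of \eqref{lemma4.1} from Lemma \ref{lm4}, tracking how the analysis changes when the coefficient of $\log\log n/\log^2 n$ in $\mathcal{F}$ increases from $1/4$ toward $1$. Setting $x = \log n$, the claimed inequality \eqref{lemma last} becomes
\begin{equation*}
x + \log x - 1 + \frac{\log x - 2.1454}{x} \;\ge\; \Big(1 - \tfrac{1}{x} + (1-\varepsilon)\tfrac{\log x}{x^2}\Big)\Big(x + \log x + \tfrac{\log x - 0.782}{x}\Big).
\end{equation*}
I would expand the right-hand side by direct multiplication (just as in the proof of Lemma \ref{lm4}) and subtract it from the left-hand side to bring the inequality into a form whose sign can be analysed term by term.

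After collecting powers of $x$ and $\log x$, the difference simplifies to
\begin{equation*}
\frac{\varepsilon\log x - 1.3634}{x} + \frac{\log x - 0.782 - (1-\varepsilon)\log^2 x}{x^2} - (1-\varepsilon)\frac{\log x(\log x - 0.782)}{x^3}.
\end{equation*}
The key observation is that the dominant term is now $\varepsilon\log x/x$, replacing the $\tfrac{3}{4}\log x/x$ that drove positivity when $\lambda = 1/4$. Of the negative contributions, the only one comparable in size is $-(1-\varepsilon)\log^2 x/x^2$, which is smaller than the leading positive term by a factor of order $x/\log x$; the remaining terms are of even lower order. Hence for each fixed $\varepsilon \in (0,1)$ the expression is eventually positive, and an admissible $m_\varepsilon$ can be extracted by requiring, for instance,
\begin{equation*}
\varepsilon \log x \;\ge\; 2(1-\varepsilon)\frac{\log^2 x}{x} + (\text{lower-order corrections}),
\end{equation*}
which holds as soon as $x/\log x$ exceeds a quantity comparable to $(1-\varepsilon)/\varepsilon$.

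The main obstacle is not the analytic idea --- it is essentially a perturbation of the Lemma \ref{lm4} argument --- but the bookkeeping needed to dominate all error terms uniformly. When $\varepsilon$ is small the powerful $\tfrac{3}{4}\log x/x$ engine from the earlier proof is replaced by the much weaker $\varepsilon\log x/x$, so the threshold $m_\varepsilon$ must grow roughly like $\exp((1-\varepsilon)/\varepsilon)$ as $\varepsilon \to 0^+$; this is presumably why the lemma only asserts the existence of $m_\varepsilon$ rather than giving an explicit formula. A clean way to package the calculation would be to show that each of the two negative terms in the displayed difference is bounded by a fixed fraction (say, $1/3$) of $\varepsilon\log x/x$ once $x \ge C(\varepsilon)$, and then take $m_\varepsilon = \lceil\exp C(\varepsilon)\rceil$.
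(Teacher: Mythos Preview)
Your approach is correct and essentially identical to the paper's: substitute $x=\log n$, expand $G(n,2.1454)-\mathcal{F}(n,1-\varepsilon)\,U(n)$, and observe that (after clearing the common factor $1/x$) the leading term $\varepsilon\log x$ eventually dominates the remaining bounded terms, so the difference is positive for all sufficiently large $x$; the paper merely packages the same expansion as a sum of three eventually-increasing functions whose total tends to $+\infty$. The only inaccuracy is your side remark on the size of $m_\varepsilon$: the binding constraint is $\varepsilon\log x>1.3634$, i.e.\ $\log n>e^{1.3634/\varepsilon}$, so $m_\varepsilon$ grows doubly exponentially in $1/\varepsilon$, not like $\exp((1-\varepsilon)/\varepsilon)$.
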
 
     \begin{proof} Fix any $0<\varepsilon<1$. We denote $a=2.1454, b=0.8-0.018$ and set $x = \log n$ to transform the inequality (\ref{lemma last}) into 
\begin{equation*}
    x+\log x-1+\frac{\log x-a}{x}\ge \Big(1-\frac{1}{x}+(1-\varepsilon)\frac{\log x}{4x^2}\Big)\Big(x+\log x+\frac{\log x-b}{x}\Big).
\end{equation*}This is equivalent to
\begin{equation*}
    \left(\varepsilon\log x+\frac{\log x}{x}\right)+\left(-a-(1-\varepsilon)\frac{\log^2 x}{x^2}(x+1)\right)+b\left(1-\frac{1}{x}+(1-\varepsilon)\frac{\log x}{x^2}\right)\geq 0.
\end{equation*}
Now, the left-hand side is a sum of three functions, each of which is strictly increasing for all sufficiently large $x,$ and the limit of the left-hand side, as $x\to\infty$, is $+\infty.$ Therefore we conclude that the last inequality holds for all sufficiently large $x.$ \end{proof}

\begin{proof}[Proof of Theorem 2.] For any $0<\varepsilon<1,$ we have $m_\varepsilon\in\mathbb{N}$ such that (\ref{lemma last}) holds for every $n\ge m_\varepsilon$. We combine this with (\ref{2.2}) and (\ref{lemma3.1}) to obtain that for every $n\geq n_\varepsilon := \max\{m_\varepsilon,140\}$
 \begin{equation*}
    \vartheta(p_n)/n\ge G(n,2.1454)\ge \mathcal{F}(n,1-\varepsilon)U(n)\geq\mathcal{F}(n,1-\varepsilon)\log p_{n+1}. \end{equation*}This completes the proof.\end{proof}

 \section{Remarks}
 \begin{enumerate}
 \item For every $n\ge 599$, we have \[\frac{\pi(n)}{n}\ge \frac{1}{\log n}+\frac{1}{\log^2 n},\]
which was found by Dusart \cite{dusart2}. Using this and a computer, we get
\begin{displaymath}
\frac{\pi(n)}{n} \geq \frac{1}{\log n - 1} \left( 1 - \frac{\log \log n}{4 \log n} \right)
\end{displaymath}
for every integer $n \geq 83$. Hence, \eqref{th1} is an improvement of \eqref{hassani}.

\vspace{2mm}
\item The bounds given in \eqref{th1} are particularly useful for comparing $\vartheta(p_n)/n$ with $\log p_{n+1}.$ To see a numerical example, we use a computer to find that for $n\ge 23$ the relative error in approximating $\vartheta(p_n)/n$ with $\mathcal{F}(n,0.25)$ is less than $5\%$ and for $n\ge 114$ it is less than $2\%.$ An important feature of \eqref{th1} is that it holds even for very small values of $n.$ 

 \end{enumerate}

\section*{Acknowledgements} 
I am thankful to Mridul Nandi
(Indian Statistical Institute, Kolkata, India) and Mehdi Hassani (University of Zanjan, Iran) for their valuable suggestions.

\makeatother

\end{document}